\documentclass[reqno,11pt,a4paper]{amsart}
\usepackage[utf8]{inputenc}  
\usepackage[T1]{fontenc}  
\usepackage{amscd,amssymb,amsmath,latexsym,url,mathrsfs,upgreek,graphicx,mathtools}
\usepackage[all]{xy}
\usepackage{eufrak}
\usepackage{hyperref} \hypersetup{breaklinks=true}

\setlength{\topmargin}{0.1cm}
\setlength{\textwidth}{14.3cm}
\setlength{\oddsidemargin}{0.9cm}
\setlength{\evensidemargin}{0.9cm}
\setlength{\textheight}{8.91in}


\newcommand{\Res}{\operatorname{Res}}

\renewcommand{\and}{{\quad \text{ and } \quad}}

\makeatletter
\newcommand{\pushright}[1]{\ifmeasuring@#1\else\omit\hfill$\displaystyle#1$\fi}
\newcommand{\pushleft}[1]{\ifmeasuring@#1\else\omit$\displaystyle#1$\hfill\fi\ignorespaces}
\makeatother

\def \A{\mathbb{A}}

\def \C{\mathbb{C}}

\def \N{\mathbb{N}}

\def \Q{\mathbb{Q}}
\def \R{\mathbb{R}}

\def \Z{\mathbb{Z}}

\def\bfA{{\boldsymbol A}}
\def\bfB{{\boldsymbol B}}

\def\bfQ{{\boldsymbol Q}}

\newcommand{\bfa}{{\boldsymbol{a}}}
\newcommand{\bfb}{{\boldsymbol{b}}}

\newcommand{\bfd}{{\boldsymbol{d}}}

\newcommand{\bff}{{\boldsymbol{f}}}
\newcommand{\bfk}{{\boldsymbol{k}}}

\newcommand{\bfh}{{\boldsymbol{h}}}

\newcommand{\bfell}{{\boldsymbol{\ell}}}

\newcommand{\bfp}{{\boldsymbol{p}}}

\newcommand{\bfs}{{\boldsymbol{s}}}

\newcommand{\bfw}{{\boldsymbol{w}}}

\newcommand{\bfz}{{\boldsymbol{z}}}

\newcommand{\bfbeta}{{\boldsymbol{\beta}}}
\newcommand{\bflambda}{{\boldsymbol{\lambda}}}

\newcommand{\bfnu}{{\boldsymbol{\nu}}}

\newcommand{\bfzeta}{{\boldsymbol \zeta}}
\newcommand{\bfkappa}{{\boldsymbol \kappa}}
\newcommand{\bfeta}{{\boldsymbol \eta}}
\newcommand{\bfvarpi}{{\boldsymbol \varpi}}

\newcommand{\bfzero}{\boldsymbol{0}}
\newcommand{\bfone}{\boldsymbol{1}}


\numberwithin{equation}{section}
\theoremstyle{definition}
\newtheorem{defn}{Definition}
\numberwithin{defn}{section}

\newtheorem{rem}[defn]{Remark}
\newtheorem{exmpl}[defn]{Example}
\newtheorem{problem}[defn]{Problem}

\theoremstyle{plain}
\newtheorem{lem}[defn]{Lemma}

\newtheorem{prop}[defn]{Proposition}
\newtheorem{thm}[defn]{Theorem}

\newtheorem{prop-def}[defn]{Proposition-Definition}

\begin{document}

\title[About a non-standard interpolation problem]{About a non-standard interpolation problem}
\author[D. Alpay]{Daniel Alpay}
\address{Schmid College of Science and Technology\\
Chapman University\\
One University Drive
Orange, California 92866\\
USA}
\email{alpay@chapman.edu}
\urladdr{\url{http://www1.chapman.edu/~alpay}}

\author[Yger]{Alain Yger}
\address{ Institut de Mathématiques, Université de Bordeaux.
351 cours de la Libération, 33405 Talence, France}
\email{Alain.Yger@math.u-bordeaux.fr}
\urladdr{\url{http://www.math.u-bordeaux.fr/~ayger}}

\date{\today} 
\subjclass[2010]{Primary 32A27; Secondary 13P}
\keywords{Residue theory; interpolation} 
\thanks{The authors thank the Foster G. and Mary McGaw Professorship in 
Mathematical Sciences, which supported this research}

\begin{abstract}
Using algebraic methods, and motivated by the one variable case, we study
a multipoint interpolation problem in the setting of several complex variables.
The  duality realized by the residue generator associated with an underlying Gorenstein algebra, using the Lagrange interpolation polynomial,
plays a key role in the arguments.
\end{abstract}

\maketitle

\overfullrule=0.3mm

\vspace*{-8mm}
\tableofcontents

\vspace*{-8mm}

\section[Introduction]{Introduction}
In \cite{AJLV15} the following multipoint interpolation problem was considered: 

\begin{problem}
Given complex numbers $a_{j,k}$ ($j=1,\ldots, m$ and $k=0,\ldots, \mu_j-1$) and $c$, 
describe the set of all functions $f$ analytic in a neighborhood $\Omega$ of the points $w_1,\ldots, w_m$ and such that
\begin{equation}
\label{pb00001}
\sum_{j=1}^m\sum_{k=0}^{\mu_j-1}a_{j,k}f^{(k)}(w_j)=c
\end{equation}
\end{problem}

Note that if $f$ solves \eqref{pb00001} so does $f+ph$, where
\begin{equation}
\label{repre1}
p(z)=\prod_{j=1}^n(z-w_j)^{\mu_j},\qquad N=\sum_{j=0}^m\mu_j.
\end{equation}
In other words one can work in the ideal $H(\Omega)/(p)$. In \cite{AJLV15} one used a different approach and
a key tool to solve the above problem was to represent any function analytic in $\Omega$ in the form
\begin{equation}
\label{pb001}
f(z)=\sum_{\nu=0}^{N-1}z^\nu f_\nu(p(z)),
\end{equation}
and $f_0,\ldots, f_{N-1}$ are analytic in a neighborhood of the origin. This representation allows to reduce condition \eqref{pb00001} to a tangential interpolation
condition at the origin for the $\mathbb C^N$-valued function $F=\begin{bmatrix}f_0&\cdots&f_{N-1}\end{bmatrix}^t$.\\

In the present paper we study the counterpart of the previous interpolation problem in the setting of several complex variables, see Problems 
\ref{problem1} and \ref{problem2} below. We now replace the polynomial \eqref{repre1} by a zero-dimensional ideal in $\mathbb C[\bfs]$ generated by
$n$ polynomials $p_1,\ldots, p_n$, and characterize the elements in the corresponding quotient space in terms of a duality realized by the residue generator associated 
with the Gorenstein algebra $\mathbb C[\mathbf s]/(p)$, using the Lagrange interpolation polynomial (see \eqref{lag123}
for the latter). This allows to define local coordinates and then translate the interpolation condition into an hyperplan condition in terms of these coordinates. Thus both in the one variable approach of \cite{AJLV15} and in the present work one 
reduces condition \eqref{pb00001} to a single interpolation condition.


\section[Zero dimensional polynomial ideals and afferent currents] {Zero dimensional polynomials ideals in $\C[s_1,...,s_n]$ and duality}
\label{section1} 

{\bf Notations.} In the polynomial algebra $\C[\bfs]$ ($\bfs=(s_1,...,s_n)$), one will denote, for any $\bfbeta \in \N^n$ as $\bfs^\bfbeta$ the monomial $s_1^{\beta_1}\dots 
s_n^{\beta_n}$. Given two elements $\bfell$ and $\bfell'$ in $\N^n$, by 
$\bfell \prec \bfell'$, we mean $\ell_j\leq \ell'_j$ for any 
$j=1,...,n$. We also denote $|\bfell|:= \ell_1 + \dots + \ell_n$ and 
$\bfell! := \ell_1!\cdots \ell_n!$.  
\vskip 2mm
  
A polynomial ideal $(\bfp) = (p_1,...,p_M)$ in $\C[\bfs]$ 
is said to be {\it zero-dimensional} if its zero set $\bfp^{-1}(0)=\{\bfzeta \in \C^n\,;\, 
p_1(\bfzeta)=\cdots = p_M(\bfzeta)=0\}$ is non-empty and discrete, hence finite since it is an algebraic subvariety in the affine space $\A^n_\C$. When additionally the number of polynomial generators equals the dimension, that is $M=n$, the set of generators $(p_1,...,p_n)$ is said to define a discrete complete intersection in $\C^n$ (or, equivalently, the sequence $(p_1,...,p_n)$ is a {\it quasi-regular sequence} in $\C[\bfs]$). 
\vskip 2mm

It is equivalent to say that $(\bfp)=(p_1,...,p_M)$ is zero-dimensional and that the 
$\C$-vector space $\C[\bfs]/(\bfp)$ is finitely dimensional, with 
$$
\dim_\C (\C[\bfs]/(\bfp)) = N(\bfp) \leq d_1d_2\cdots d_n
$$ 
(provided $d_2=\deg p_2 \geq d_3 =\deg p_3 \geq \cdots 
\geq d_M=\deg p_M\geq d_1 =\deg p_1$)~; this follows from B\'ezout geometric theorem.
In order to construct a monomial basis ${\mathscr B}_{\C[\bfs]/(\bfp)}^{\prec\prec} = \{\dot\bfs^{\bfbeta_k}\,;\, k=0,...,N(\bfp)-1\}$ for $\C[\bfs]/(\bfp)$, which will be required in order to settle the results presented in this paper, one proceeds algorithmically as follows~: 
\begin{itemize} 
\item decide of an order $\prec\prec$ on $\N^n$ (e.g the reverse lexicographic order, which is the most currently used)~; 
\item compute a Gr\"obner basis $\EuFrak G_{\bfp} := \{\EuFrak g_1,...,\EuFrak g_L\}_{\prec\prec}$ (with respect to the order $\prec\prec$ fixed from the beginning)~;  
\item collect all monomials that do not belong to the monomial ideal generated by the leading monomials (comparing their multi-exponents in terms of the order $\prec\prec$) of the polynomial entries in $\EuFrak G_{\bfp}$.
\end{itemize}

\begin{exmpl}\label{example1} In the particular case where $M=n$ and each $p_j$ is a 
univariate polynomial with degree $d_j$ in the single variable $s_j$, a monomial basis $\mathscr B_{\C[\bfs]/(\bfp)}^\bfd$ is provided (thanks to the Euclidean division algorithm with respect successively to the variables $s_1,...,s_n$) as 
\begin{equation}\label{basisfonda} 
\mathscr B_{\C[\bfs]/(\bfp)}^{\bfd} = \mathscr B_\bfp^{\rm euclid} := 
\{\dot \bfs^{\bfbeta}\,;\, \bfbeta \in \N^n\ {\rm with}\ \bfbeta \prec \bfd-\bfone\}\quad \big(\bfd:=(d_1,...,d_n)\big).   
\end{equation}    
\end{exmpl} 
\vskip 2mm

Zero-dimensional ideals in $\C[\bfs]$ that will be of interest for us in this paper will be generated by exactly 
$n$ polynomials $(p_1,...,p_n)$ (defining then a quasi-regular sequence in $\C[\bfs]$). In such a case, one can find 
a matrix $\bfA=[a_{j,k}]\in \mathscr M_{n,n}(\C[\bfs])$ and $n$ univariate polynomials $q_1(s_1),...,q_n(s_n)$ such that 
\begin{equation}\label{TL0}  
\begin{bmatrix}
q_1(s_1) \\ 
\vdots 
\\
q_n(s_n) 
\end{bmatrix} = \bfA(\bfs)\cdot \begin{bmatrix} 
p_1(\bfs)\\ \vdots \\ p_n(\bfs) 
\end{bmatrix}  
\end{equation}
with $\max_{j,k} (\deg q_j, \deg p_j + \deg a_{j,k}) \leq d_1\cdots d_n$ 
(resp. $\leq d_1 \cdots d_n d_{n+1}$), see \cite{Jel}. 
\vskip 2mm
\noindent    
Given a zero-dimensional ideal $(\bfp)=(p_1,...,p_M)$, the finitely dimensional $\C$-vector space ${\rm Hom}_\C(\C[\bfs]/(\bfp),\C)$ inherits a structure of $\C[\bfs]/(\bfp)$-module, setting 
\begin{equation}\label{defmodule} 
\forall\, \dot h \in \C[\bfs]/(\bfp), \quad 
\forall\, \Phi\, \in {\rm Hom}_\C(\C[\bfs]/(\bfp),\C),\quad 
\dot h \cdot \Phi = \Phi \circ \bfh, 
\end{equation} 
where $\bfh$ denotes the element in ${\rm Hom}_\C(\C[\bfs]/(\bfp),\C[\bfs]/(\bfp))$ 
which is induced by the multiplication by $h$. 
\vskip 2mm

In the particular case where $M=n$ and 
$\bfp = (p_1,...,p_n)$ is a quasi-regular sequence, the 
$\C[\bfs]/(\bfp)$-module $\C[\bfs]/(\bfp)\cdot {\rm Hom}_\C(\C[\bfs]/(\bfp),\C)$ defined as in \eqref{defmodule} is generated by the single element 
\begin{equation}\label{TL1} 
\dot h \longmapsto \Res \begin{bmatrix} 
h(\bfs)\, ds_1 \wedge \cdots \wedge ds_n \\ p_1(s),...,p_n(s)\end{bmatrix} = 
\Res \begin{bmatrix} 
h(\bfs)\, \det [\bfA(s)]\, ds_1 \wedge \cdots \wedge ds_n \\ q_1(s_1),...,q_n(s_n)\end{bmatrix},  
\end{equation} 
where the univariate polynomials $q_j$ and the matrix $\bfA \in 
\mathscr M_{n,n}(\C[\bfs])$ satisfy \eqref{TL0} (independently of the 
choice of the $q_j(s_j)$ and $\bfA$ such that such the matricial identity 
\eqref{TL0} holds). If $\bfd_q=(\deg q_1,...,\deg q_n)$ and $\bfone=(1,...,1)$, the 
right-hand side of \eqref{TL1} equals the coefficient $\tau_{\bfd_q-\bfone}$ of the monomial $s^{\bfd_q-\bf1}$ in the remainder $\sum_{\bfell \prec \bfd_q-\bfone} 
\tau_{\bfell} \bfs^\bfell$ after the successive euclidean divisions 
respectively by $q_1(s_1) = s_1^{\deg q_1}+\cdots$,...,$q_n(s_n)=s_n^{\deg q_n}+\cdots$
of the multivariate polynomial $h(\bfs)\det[\bfA(\bfs)]$ ($h$ being any representant of $\dot h$), see for example \cite{ElkMou}. The following important equivalence materializes in this case algebraic duality~: 
\begin{equation}\label{duality}  
\forall\, F \in \C[\bfs],\quad F \in \sum_{j=1}^n \C[\bfs]\, p_j \Longleftrightarrow 
\forall\, \Phi \in \C[\bfs],\ \Res \begin{bmatrix} 
\Phi(\bfs)\, F(\bfs)\, ds_1 \wedge \cdots \wedge ds_n \\ p_1(s),...,p_n(s)\end{bmatrix}=0,  
\end{equation}
which amounts to say that the quadratic form 
\begin{equation}\label{quadraticform} 
\bfQ_{\bfp}~: 
(\dot \Phi,\dot \Psi) \in (\C[\bfs]/(\bfp))^2 \longmapsto  \Res \begin{bmatrix} 
\Phi(\bfs)\, \Psi(\bfs)\, ds_1 \wedge \cdots \wedge ds_n \\ p_1(s),...,p_n(s)\end{bmatrix} \in \C
\end{equation}
is non-degenerated. The matrix of this non-degenerated quadratic form expressed in the monomial basis ${\mathscr B}_{\C[\bfs]/(\bfp)}^{\prec\prec} = \{\dot\bfs^{\bfbeta_k}\,;\, k=0,...,N(\bfp)-1\}$ for the $\C$-finite dimensional vector space 
$\C[\bfs]/(\bfp)$ is then 
\begin{multline}\label{matrixQp}  
\bfQ_{\bfp}[\mathscr B_{\C[\bfs]/(\bfp)}^{\prec\prec}] = 
\Bigg[ 
\Res \begin{bmatrix} 
s^{\bfbeta_{k_1} + \bfbeta_{k_2}}\, ds_1 \wedge \cdots \wedge ds_n \\ p_1(s),...,p_n(s)\end{bmatrix}\Bigg]_{0\leq k_1,k_2\leq N(\bfp)-1} \\ 
= \Bigg[ 
\Res \begin{bmatrix} 
s^{\bfbeta_{k_1} + \bfbeta_{k_2}}\, \det [\bfA(\bfs)]\, ds_1 \wedge \cdots \wedge ds_n \\ q_1(s_1),...,q_n(s_n)\end{bmatrix}\Bigg]_{0\leq k_1,k_2\leq N(\bfp)-1}. 
\end{multline} 
\vskip 2mm

When $M=n$, one can attach to the homomorphism \eqref{TL1} a unique complex valued $(0,n)$ current $\bigwedge_{j=1}^n \bar\partial (1/p_j)$ in $\C^n$ such that, whenever $U$ is an open subset of $\C^n$, $\overline h  \bigwedge_{j=1}^n (\bar\partial (1/p_j))_{|U} = 0$ as a $(0,n)$-current in $U$ 
for any $h\in H(U)$ which vanishes on $\bfp^{-1}(0)$ and moreover 
$$
\forall\, \dot h \in \C[\bfs]/(\bfp),\quad 
\Big\langle \bigwedge_{j=1}^n \bar\partial (1/p_j)\,,\, 
h(s)\, ds_1 \wedge \dots \wedge ds_n\Big\rangle = 
\Res \begin{bmatrix} 
h(\bfs)\, ds_1 \wedge \cdots \wedge ds_n \\ p_1(s),...,p_n(s)\end{bmatrix}.  
$$
Such a current can be defined in several ways. One of the most robust ones 
is the following (see \cite{BGVY},  
\cite{Sam})~: for any $(n,0)$-test form $\varphi(\bfs)\, ds_1\wedge \cdots \wedge ds_n$ where $\varphi \in \mathscr D(\C^n,\C)$ is compactly supported sufficiently close from $\bfp^{-1}(0)$, the holomorphic mapping 
\begin{multline*}
\bfvarpi = (\varpi_1,...,\varpi_n) 
\in \{\bfvarpi
\in \C^n\,;\, {\rm Re}\, \varpi_j > 1\ {\rm for}\ j=1,...,n\} 
\longmapsto \\
\frac{1}{(2i\pi)^n} 
\int_{\C^n} 
\bar\partial 
\Big(\frac{|p_n(\bfs)|^{2\varpi_n}}{p_n(\bfs)}\Big)  
\wedge \cdots \wedge \bar \partial \Big(\frac{|p_1(\bfs)|^{2\varpi_1}}{p_1(\bfs)}\Big)\wedge 
\varphi(\bfs) \, ds_1 \wedge \cdots \wedge ds_n  
\end{multline*} 
extends as an holomorphic function in 
$\{\bfvarpi \in \C^n\,;\, {\rm Re}\, \varpi_j > -\eta\}$ for some 
$\eta>0$, which value at $\bfvarpi=0$ equals precisely 
$\big\langle \bigwedge_{j=1}^n \bar\partial (1/p_j),\varphi\, ds_1 \wedge 
\cdots \wedge ds_n\rangle$.  

\begin{exmpl}\label{example2}  
If $p_1,...,p_n$ are $n$ univariate monic polynomials in the respective variables 
$s_1,...,s_n$ with 
\begin{equation}\label{univariate} 
p_j(\bfs) = p_j(s_j) = \prod\limits_{\kappa_j=1}^{m_j} (s_j-\xi_{j,\kappa_j})^{\nu_{j,\kappa_j}},\ j=1,...,n,  
\end{equation} 
one has 
\begin{multline}\label{example2form}  
\Big\langle \bigwedge_{j=1}^n \bar\partial (1/p_j)\,,\, 
\varphi(\bfs)\, ds_1 \wedge \dots \wedge ds_n\Big\rangle = 
\sum\limits_{\kappa_1=1}^{m_1} 
\cdots \sum\limits_{\kappa_n=1}^{m_n} 
\Big(\prod_{j=1}^n 
\frac{1}{(\nu_{j,\kappa_j}-1)!}\Big) \\
\Big( 
\frac{\partial^{\nu_{1,\kappa_1}-1}}
{\partial s_1^{\nu_{1,\kappa_1}-1}} \circ 
\cdots \circ \frac{\partial^{\nu_{n,\kappa_n}-1}}
{\partial s_n^{\nu_{n,\kappa_n}-1}}\Big)
\Big[\varphi (\bfs)\, 
\prod\limits_{j=1}^n \frac{(s_j- \xi_{j,\kappa_j})^{\nu_{j,\kappa_j}}}
{p_j(s_j)}\Big] (\xi_{1,\kappa_1},...,\xi_{n,\kappa_n}).   
\end{multline} 
\end{exmpl} 
\vskip 2mm 

The analytic pendant of the realization of algebraic duality \eqref{duality} is then ~: 
\begin{multline}\label{duality1} 
\forall\, U \ {\rm open\ subset\ of}\ 
\C^n,\quad \forall\, f\in H(U),\quad 
f \in \Big(\sum_{j=1}^n H(U)\, p_j\Big)_{{\rm loc}} \\ 
\Longleftrightarrow 
\forall\, \varphi \in \mathscr D(U,\C),\ 
\Big\langle \bigwedge_{j=1}^n \bar\partial (1/p_j)\,,\, f(\bfs)\, \varphi(\bfs)\, 
ds_1 \wedge \cdots \wedge ds_n \Big\rangle =0.  
\end{multline} 
\vskip 2mm 

In order to describe more precisely the current $\bigwedge_{j=1}^n 
\bar\partial (1/p_j)$ when $(p_1,...,p_n)$ is a quasi-regular sequence in 
$\C[\bfs]$ (as we did in Example \ref{example2} in the particular case where each $p_j$ is univariate in the single variable $s_j$, see \eqref{example2form}), we need to recall how each of the distinct points $w_j$, $j=1,...,m$, of the set $\bfp^{-1}(0)$ is equipped 
with a {\it multiplicity} $\nu_{w_j}(\bfp)\in \N^*$. Given $w_j\in \bfp^{-1}(0)$, such an integer $\nu_{w_j}(\bfp)$ can be defined in two ways~: 
\begin{itemize} 
\item ``algebraically'', as the dimension of the $\C$-vector space 
$\mathscr O_{\C^n,w_j}/(\bfp)_{w_j}$, where $(\bfp)_{w_j}$ denotes 
the ideal generated by the germs at $w_j$ of the polynomials $p_1,...,p_n$ 
in the local regular ring $\mathscr O_{\C^n,w_j}$ of germs of holomorphic functions about the point $w_j$~; 
\item ``dynamically'', as the number of points in the fiber 
$\bfp^{-1}(\bfeta)$ which remain close to $w_j$ when $\bfeta\in (\C^*)^n$
tends to $\bfzero$ in $\C^n$ and is taken as a non-critical value for the polynomial map $\bfp$.  
\end{itemize} 
If one uses the first definition, it is easy to see that 
$$
N(\bfp) = \dim_\C \big(\C[\bfs]/(\bfp)\big) = \sum_{j=1}^m \nu_{w_j}(\bfp). 
$$    
It follows from \eqref{duality1}, together with the fact that the 
N\oe ther exponent of the ideal $(\bfp)_{w_j}$ in 
$\mathscr O_{\C^n,w_j}$ is bounded from above by 
$n\, \nu_j(\bfp)$ (see \cite{Pl,Tsi}), that the order 
of the current $\bigwedge_{j=1}^n \bar\partial(1/f_j)$ about 
the point $w_j$ is at most $n\, \nu_{w_j}(\bfp)-1$. Therefore there 
exists a collection of differential operators $\mathscr Q_{w_1}(\partial/\partial \bfs),...,\mathscr Q_{w_m}(\partial/\partial \bfs)\in \C[\partial/\partial \bfs]$ (where $\partial/\partial \bfs = 
(\partial/\partial s_1,...,\partial/\partial s_n)$) such that 
$\deg_{\partial/\partial \bfs} \mathscr Q_{w_j} \leq n \nu_{w_j}(\bfp)-1$ for $j=1,...,m$ and 
\begin{multline}\label{representationresidualcurrent}  
\Big\langle \bigwedge_{j=1}^n \bar\partial (1/p_j)\,,\, 
\varphi(\bfs)\, ds_1 \wedge \dots \wedge ds_n\Big\rangle = 
\sum\limits_{j=1}^n \mathscr Q_{w_j}(\partial/\partial \bfs)[\varphi](w_j)\\ 
\forall\, \varphi \in \mathscr D(\C^n,\C).  
\end{multline} 

\begin{exmpl}\label{example3} If $p_1,...,p_n$ are monic univariate polynomials respectively in the variables $s_1,...,s_n$ as in Example \ref{example2} (more precisely of the form 
\eqref{univariate}), the multiplicity 
$\nu_w(\bfp)$ at the point $w=(\xi_{1,\kappa_1},...,\xi_{n,\kappa_n})$ equals 
$\prod_{j=1}^n \nu_{j,\kappa_j}$ and the order of the differential operator 
$\mathscr Q_w(\partial/\partial \bfs)$ attached to such $w\in \bfp^{-1}(0)$ 
as in \eqref{example2form} equals in this case $\prod_{j=1}^n (\nu_{j,\kappa_j}-1)$, which happens to be strictly less than $n\, \nu_{w}(\bfp)-1$ which should stand for the estimate of $\deg \mathscr Q_w$ in the general case. In the general case estimates 
$\deg \mathscr Q_{w_j}\leq n\, \nu_{w_j}(\bfp)-1$ ($j=1,...,m$) cannot in fact be sharpened. 
\end{exmpl} 

\vskip 2mm 

When $U$ is an open subset of $\C^n$, $f\in H(U)$ and 
$\varphi\in \mathscr D(U,\C)$, it follows from the Leibniz rule, together with the symetry of the left-hand side expression in $(\varphi,f)$ from the computational point of view, that one can write 
\begin{equation}\label{reformulation} 
\begin{split}  
& \Big\langle \bigwedge_{j=1}^n \bar\partial (1/p_j)\,,\, 
f(\bfs)\, \varphi(\bfs)\, ds_1 \wedge \dots \wedge ds_n\Big\rangle \\
& = 
\sum\limits_{\{j\in \{1,...,m\}\,;\, w_j\in U\}}\;   
\sum\limits_{\bfell \in A_{w_j}}
\frac{\partial^{|\bfell|}}{\partial \bfs^\bfell}[\varphi](w_j)\, 
\mathscr Q_{w_j,\bfell}(\partial/\partial \bfs)[f](w_j)\\ 
& = \sum\limits_{\{j\in \{1,...,m\}\,;\, w_j\in U\}}\  
\sum\limits_{\bfell \in A_{w_j}}
\frac{\partial^{|\bfell|}}{\partial \bfs^\bfell}[f](w_j)\, 
\mathscr Q_{w_j,\bfell}(\partial/\partial \bfs)[\varphi](w_j), 
\end{split} 
\end{equation}  
where $A_{w_1},...,A_{w_n}$ are finite subsets of $\N^n$ which are such 
that 
$$
\bfell \in A_{w_j} \Longrightarrow |\bfell|\leq n\, \nu_{w_j}(\bfp)-1\quad  
\forall\, j=1,...,m
$$
and each $\mathscr Q_{w_j,\bfell}$ ($j=1,...,m$, $\bfell\in A_{w_j}$) 
is a polynomial in $\partial/\partial \bfs$ with total degree at most 
$n\, \nu_{w_j}(\bfp)-1-|\bfell|$ with support in $A_{w_j}$. Note that the $A_{w_j}$ and the 
$\mathscr Q_{w_j,\bfell}$ ($j=1,...,m$, $\bfell\in A_{w_j}$) 
depend only on the differential operators $\mathscr Q_{w_j}(\partial/\partial \bfs)$ 
involved in \eqref{representationresidualcurrent}, hence only on the given polynomial quasi-regular sequence $\bfp=(p_1,...,p_n)$.  

\begin{defn}\label{noetherian}  
The list of differential operators with complex coefficients (and assigned evaluations) 
\begin{equation}\label{listopnoetheriens} 
{\rm Noeth}_\bfp[(\bfp)] := \Big\{\mathscr Q_{w_j,\bfell}(\partial/\partial \bfs)_{|w_j}\,;\, 
j=1,...,m,\ \bfell \in A_{w_j}\Big\} 
\end{equation} 
will be called {\it the standard list of assigned N\oe therian differential operators for the ideal $(\bfp)$ when considered as generated by the quasi-regular sequence $\bfp=(p_1,...,p_n)$}. 
\end{defn} 

\vskip 2mm

Thanks to Definition \ref{noetherian}, one can reformulate \eqref{duality1} as 
\begin{multline}\label{duality2} 
\forall\, U \ {\rm open\ subset\ of}\ 
\C^n,\quad \forall\, f\in H(U),\quad 
f \in \Big(\sum_{j=1}^n H(U)\, p_j\Big)_{{\rm loc}} \\ 
\Longleftrightarrow 
\forall\, w_j \in U,\ \forall\, \bfell \in A_{w_j},\ 
\mathscr Q_{w_j,\bfell}(\partial/\partial \bfs)[f](w_j) =0. 
\end{multline}   

\begin{exmpl}\label{example4} When the polynomials $p_j$ are monic univariate polynomials 
respectively in the variables $s_1,...,s_n$ as \eqref{univariate}, one has 
\begin{multline}\label{noetherianexample}
{\rm Noeth}_{(p_1(s_1),...,p_n(s_n))}[(p_1(s_1),...,p_n(s_n))]= \\ 
\Big\{
\frac{\partial^{|\bfell|}}{\partial s^\bfell}_{|(\xi_{1,\kappa_1},...,\xi_{n,\kappa_n})}\,;\, 1\leq \kappa_j\leq m_j\ {\rm for}\ j=1,...,m,\ \bfell\prec 
(\nu_{1,\kappa_1},...,\nu_{n,\kappa_n})-\bfone\Big\} 
\end{multline} 
\end{exmpl}

\section{Cauchy-Weil's integral representation formula and Lagrange interpolation} 

Cauchy-Weil's integral representation formula (originally introduced in \cite{Weil})  plays a major role 
in this paper. Let us briefly recall it in the particular simple case where it happens to be the most useful (one refers for example to 
\cite{AY,BGVY,Glea,BoH, TsiY} for a more detailed as well as a presentation in its generality in the analytic or algebraic context). Let $f_1,...,f_n$ 
be $n$ holomorphic functions in a bounded open set $U \subset \C^n$ (possibly not connected) and continuous up to $\partial U$, with no common zero on $\partial U$,  
such that additionally there exists a matrix $\bfB_\bff\in \mathscr M_{n,n}\big(H\big(U\times U) \cap 
C(\overline U \times U)\big)$ with 
$$
\forall \, j\in \{1,...,n\},\quad 
\forall\, \bfs,\bfz\in U,\quad 
f_j(\bfs) - f_j(\bfz) = \sum\limits_{\ell =1}^n b_{j,\ell}(\bfs,\bfz)\, (s_j-z_j). 
$$   
Such a matrix $\bfB_\bff$ (which is definitevely non unique as soon as $n>1$) is called an {\it Hefer matrix} or a {\it B\'ezoutian matrix} when the $f_j$ happen to be (as it will be the case in this paper) polynomial functions. 
The set $V(\bff):=\{w\in \overline U\,;\, \bff(w)=0\}$ is necessarily finite since 
$\bff$ does not vanish on $\partial U$. 
For almost all 
$\varepsilon \in (\R^+)^n$ such that $\|\varepsilon\|$ is small enough, the so-called {\it Weil analytic polyhedron} 
$$
\Delta_{\varepsilon}~:= \{\bfs\in U\,;\, 
|f_j(\bfs)|<\varepsilon_j,\ j=1,...,n\}
$$
is relatively compact in $U$ 
and, provided $\varepsilon$ is not a critical value of 
the smooth map $\bfs\in U \mapsto (|f_1(\bfs)|^2,...,|f_n(\bfs)|^2)$ 
(the set of such critical values being negligible in 
$(\R^+)^n$ according to A. Sard's lemma), is such its 
{\it Shilov boundary} 
$$
\Gamma_{\rm Shilov}(\overline{\Delta_\varepsilon}) := 
\{\bfs\in \overline{\Delta_\varepsilon}\,;\, 
|f_j(\bfs)|=\varepsilon_j,\ j=1,...,n\}
$$
is a real analytic $n$-dimensional manifold which will be oriented as follows~: the $n$-differential form $\bigwedge_{j=1}^n d[{\rm arg} (f_j(\bfs))]$ on will be a $n$-volume form on it. We denote then as 
$\Gamma_{\rm Shilov}^+(\overline{\Delta_\varepsilon})$ the corresponding real-analytic $n$-cycle. Then any holomorphic function $f\in H(U) \cap C(\overline U)$ can be represented in 
$\Delta_\varepsilon$ (whenever this Weil polyhedron is connected or not) as 
\begin{multline}\label{Weilformula} 
\forall\, \bfz\in U,\quad 
f(\bfz) = \\
\frac{1}{(2i\pi)^n} 
\int_{\Gamma_{\rm Shilov}^+(\overline{\Delta_\varepsilon})} 
f(\bfs)\, \det\, [\bfB_\bff(\bfs,\bfz)]\, \frac{ds_1 \wedge \dots \wedge ds_n} 
{\big(f_1(\bfs) - f_1(\bfz)\big) \cdots \big( f_n(\bfs) - f_n(\bfz)\big)}. 
\end{multline}

\vskip 2mm

In this section, one considers a zero-dimensional polynomial ideal generated by a quasi-regular sequence $\bfp=(p_1,...,p_n)$, which means that $\bfp^{-1}(0)$ is a non-empty finite set $\{w_1,...,w_n\}$ in $\C^n$ (supposed distinct, each of them been equipped with a multiplicity $\nu_{w_j}(\bfp)$ such that $\sum_{j=1}^m \nu_{w_j}(\bfp)= \dim_\C (\C[\bfs]/(\bfp))=N(\bfp)$). In all this section, let us also suppose that a monomial basis 
$$
{\mathscr B}_{\C[\bfs]/(\bfp)}^{\prec\prec} = \{\dot\bfs^{\bfbeta_k}\,;\, k=0,...,N(\bfp)-1\}
$$  
for $\C[\bfs]/(\bfp)$ has been obtained thanks to the search for 
a Gr\"obner basis for $(\bfp)$ with respect to the prescribed ordering 
$\prec\prec$ on monomials in $\C[\bfs]$ (as recalled in section \ref{section1}).  

\begin{prop}\label{propWeil1} Let $\bfp=(p_1,...,p_n)$ and 
${\mathscr B}_{\C[\bfs]/(\bfp)}^{\prec\prec}$ as above. 
Let $U$ be an open subset of $\C^n$ which contains $\bfp^{-1}(0)$ and $f\in H(U)$. 
There is a unique system of coordinates $(\alpha_0[f],...,\alpha_{N(\bfp)-1}[f])
\in \C^n$ such that the holomorphic function 
$$
\bfs\in U \longmapsto f(\bfs) - \sum\limits_{k=0}^{N(\bfp)-1} 
\alpha_k[f]\, \bfs^{\bfbeta_k}    
$$
belongs the ideal $(\sum_{j=1}^n H(U) p_j)_{\rm loc}$. 
\end{prop}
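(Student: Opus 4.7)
The plan is to handle uniqueness first, using the Noetherian duality just established, and then produce existence by applying the Cauchy-Weil representation \eqref{Weilformula} to the quasi-regular sequence $\bfp$ itself.

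For uniqueness, suppose $g(\bfs) := \sum_{k=0}^{N(\bfp)-1} \alpha_k\, \bfs^{\bfbeta_k}$ lies in $\big(\sum_{j=1}^n H(U)\, p_j\big)_{\rm loc}$. Because $\bfp^{-1}(0)\subset U$, the equivalence \eqref{duality2} forces $\mathscr Q_{w_j,\bfell}(\partial/\partial\bfs)[g](w_j)=0$ for every $j$ and every $\bfell\in A_{w_j}$. Since these Noetherian operators jointly implement the residue pairing via \eqref{representationresidualcurrent}, the algebraic duality \eqref{duality} then yields $g\in (\bfp)$ as a polynomial, i.e.\ $\sum_k \alpha_k \dot\bfs^{\bfbeta_k}=0$ in $\C[\bfs]/(\bfp)$; as $\mathscr B_{\C[\bfs]/(\bfp)}^{\prec\prec}$ is a basis, every $\alpha_k$ vanishes.

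For existence I would first build the coefficients in a neighborhood of $\bfp^{-1}(0)$ using Cauchy-Weil. Fix a polynomial B\'ezoutian matrix $\bfB_\bfp(\bfs,\bfz)$ for the sequence $\bfp$ and, for $\varepsilon\in(\R^+)^n$ sufficiently small (and not a Sard critical value), consider the Weil polyhedron $\Delta_\varepsilon=\{\bfs\in U\,;\,|p_j(\bfs)|<\varepsilon_j\}$, which is relatively compact in $U$ with Shilov boundary $\Gamma$ disjoint from $\bfp^{-1}(0)$. Telescoping each factor of the Cauchy-Weil denominator via
$$
\frac{1}{p_j(\bfs)-p_j(\bfz)} \;=\; \frac{1}{p_j(\bfs)} + \frac{p_j(\bfz)}{p_j(\bfs)\,\bigl(p_j(\bfs)-p_j(\bfz)\bigr)}
$$
and expanding the full product, \eqref{Weilformula} with $\bff=\bfp$ produces a splitting
$$
f(\bfz) \;=\; P_f(\bfz) + \sum_{j=1}^n p_j(\bfz)\, H_j(\bfz), \qquad \bfz\in\Delta_\varepsilon,
$$
where
$$
P_f(\bfz) := \frac{1}{(2i\pi)^n}\int_\Gamma f(\bfs)\, \det[\bfB_\bfp(\bfs,\bfz)]\, \frac{ds_1\wedge\cdots\wedge ds_n}{p_1(\bfs)\cdots p_n(\bfs)}
$$
is a polynomial in $\bfz$ (the $\bfs$-integration yields only complex constants, while $\det[\bfB_\bfp(\bfs,\bfz)]$ is polynomial in $\bfz$), and each $H_j \in H(\Delta_\varepsilon)$ because $|p_j(\bfz)|<\varepsilon_j\leq|p_j(\bfs)|$ on $\Gamma$. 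Reducing $P_f$ modulo the Gr\"obner basis $\EuFrak G_\bfp$ delivers a unique expansion $P_f\equiv \sum_k \alpha_k[f]\,\bfs^{\bfbeta_k}\pmod{(\bfp)}$; by construction $f-\sum_k\alpha_k[f]\,\bfs^{\bfbeta_k}$ lies in $\sum_j H(\Delta_\varepsilon)\,p_j$, so its germ at every $w_j\in\bfp^{-1}(0)$ sits in $(\bfp)_{w_j}$. At any point $z\in U\setminus\bfp^{-1}(0)$ the local-ideal condition is automatic, since some $p_j(z)\neq 0$ makes $p_j$ a unit in $\mathscr O_{\C^n,z}$. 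Hence $f-\sum_k\alpha_k[f]\,\bfs^{\bfbeta_k}\in\big(\sum_j H(U)\, p_j\big)_{\rm loc}$, proving existence.

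The main obstacle is verifying the clean split of the Cauchy-Weil kernel: one must expand $\prod_j 1/(p_j(\bfs)-p_j(\bfz))$ through the telescope identity and track that the ``constant in $\bfz$'' summand assembles, after $\bfs$-integration, into a genuine polynomial $P_f(\bfz)$ of controlled degree, while every remaining summand carries an explicit factor $p_j(\bfz)$ and thereby contributes to the holomorphic coefficients $H_j$. This is a careful but essentially formal combinatorial manipulation within Cauchy-Weil calculus; the remainder of the proof is then a direct combination of the algebraic duality \eqref{duality} with its Noetherian analytic counterpart \eqref{duality2}.
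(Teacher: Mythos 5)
Your proposal is correct and follows essentially the same route as the paper: uniqueness by linear independence of the monomial basis in $\C[\bfs]/(\bfp)$, existence by applying Cauchy--Weil to $\bfp$ and splitting the kernel into the ``constant in $\bfz$'' polynomial part (the Lagrange interpolator) plus terms carrying explicit $p_j(\bfz)$ factors. The only cosmetic difference is that you split $1/(p_j(\bfs)-p_j(\bfz))$ via a finite telescope identity, while the paper expands $1/(1-p_j(\bfz)/p_j(\bfs))$ as a geometric series; both yield the same polynomial $P_f$ and the same holomorphic remainder on the Weil polyhedron.
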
 

\begin{proof}The proof of the unicity clause goes as follows~: if a polynomial function 
which restriction to $U$ is $\bfs \in U \mapsto \sum_{k=0}^{N(\bfp)-1} 
(\alpha_k - \tilde \alpha_k) \bfs^{\beta_k}$ 
belongs  to $\big(\sum_{j=1}^n H(U)\, p_j\big)_{\rm loc}$, it implies since 
$\bfp^{-1}(0)\subset U$ that the polynomial $\sum_{k=0}^{N(\bfp)-1} 
(\alpha_k - \tilde \alpha_k) \bfs^{\bfbeta_k}$ belongs to 
$\C[\bfs] p_1 + \cdots + \C[\bfs]\, p_n$, that is $\alpha_k=\tilde \alpha_k$ for 
$k=0,...,N(\bfp)-1$ since the collection $\{\dot \bfbeta^k\,;\, k=0,...,N(\bfp)-1\}$ is a basis of the quotient $\C$-vector space 
$\C[\bfs]/(\bfp)$. 
\\
\noindent 
As for the existence, one proceeds as follows. 
Let $\bfB_\bfp\in \mathscr M_{n,n}(\C[\bfs,\bfz])$ be any B\'ezoutian matrix of 
polynomials in $2n$ variables $(\bfs,\bfz)=(s_1,...,s_n,z_1,...,z_n)$ such that the following polynomial identities hold in $\C[\bfs,\bfz]$~: 
\begin{equation}\label{bezoutian}
p_j(\bfs)-p_j(\bfz) = \sum\limits_{\ell=1}^n \bfb_{j,\ell}(\bfs,\bfz)\, (s_\ell - z_\ell),\quad j=1,...,n.    
\end{equation}
Such a matrix $\bfB_\bfp$ always exists~: one can for example either invoke the so-called Fundamental Theorem of Analysis and take
$$
\bfb_{\bfp,j,\ell}(\bfs,\bfz) := \int_0^1 \Big(\frac{\partial}{\partial z_\ell}\Big)[p_j\big(\bfz + t(\bfs-\bfz)\big)]\, dt\quad \forall\, j,\ell \in \{1,...,n\}
$$   
or better proceed iteratively as follows for each $j=1,...,n$~: 
\begin{multline*} 
p_{j}(\bfs)-p_j(\bfz) = \frac{p_j(s_1,s_2,...,s_n)-p_j(z_1,s_2,...,s_n)}{s_1-z_1}\, (s_1-z_1) \\ 
+ \frac{p_j(z_1,s_2,s_3,...,s_n) - p_j(z_1,z_2,s_3,...,s_n)}{s_2-z_2}\, (s_2-z_2) + 
\cdots   
\end{multline*} 
in order to keep track of the smallest subring $\A\subset \C$ 
(for example $\A=\Z$ or $\A=\Q$) that contains all coefficients of $\bfp$ (if all 
$p_j$ lie in $\A[\bfs]$, so do then all entries of such $\bfB_\bfp$).   
Consider then a Weil polyhedron $\Delta\subset \subset U$ subordonned to $(p_1,...,p_n)$ in the open set $U$. Cauchy-Weil's integral representation formula \eqref{Weilformula}, together with the fact that the rational function $\zeta \mapsto 1/(1-\zeta)$ can be expanded normally on any compact of the unit disk $D(0,1)$ as $\sum_{\ell\geq 0} \zeta^\ell$, imply 
\begin{multline}\label{CW1}  
\forall\,z \in \Delta,\quad 
f(\bfz) =  \frac{1}{(2i\pi)^n} 
\int_{\Gamma_{\rm Shilov}^+(\overline \Delta)}\, f(\bfs)\, \frac{\det [\bfB_\bfp(\bfs,\bfz)]\, ds_1 \wedge \cdots\wedge ds_n}{\prod_{j=1}^n \big(p_j(\bfs)-p_j(\bfz)\big)}\\ 
= \frac{1}{(2i\pi)^n} 
\int_{\Gamma_{\rm Shilov}^+(\overline \Delta)}\, f(\bfs)\, \frac{\det [\bfB_\bfp(\bfs,\bfz)]}{\prod\limits_{j=1}^n p_j(\bfs)}\, \Bigg(\prod\limits_{j=1}^n \, 
\frac{1} {1 - \displaystyle{\frac{p_j(\bfz)}{
p_j(\bfs)}}}\Bigg)\, ds_1 \wedge \cdots\wedge ds_n \\
= \frac{1}{(2i\pi)^n} 
\int_{\Gamma_{\rm Shilov}^+(\overline \Delta)}\, f(\bfs)\, \frac{\det [\bfB_\bfp(\bfs,\bfz)]\, ds_1 \wedge \cdots\wedge ds_n}{\prod_{j=1}^n p_j(\bfs)} \\ 
+ \sum\limits_{\bfell \in \N^n\setminus 
\bf0} 
\Bigg(\frac{1}{(2i\pi)^n} 
\int_{\Gamma_{\rm Shilov}^+(\overline \Delta)}\, f(\bfs)\, \frac{\det [\bfB_\bfp(\bfs,\bfz)]\, ds_1 \wedge \cdots\wedge ds_n}{\prod_{j=1}^n p_j^{\ell_j+1}(\bfs)} 
\Bigg)\, \bfp^\bfell(\bfz) \\ 
= \frac{1}{(2i\pi)^n} 
\int_{\Gamma_{\rm Shilov}^+(\overline \Delta)}\, f(\bfs)\, \frac{\det [\bfB_\bfp(\bfs,\bfz)]\, ds_1 \wedge \cdots\wedge ds_n}{\prod_{j=1}^n p_j(\bfs)} 
+ \sum\limits_{j=1}^n p_j(\bfz)\, g_{\Delta,j}(\bfz),        
\end{multline}
where $g_{\Delta,j}\in H(\Delta)$ for any $j=1,...,n$. If one takes as 
$(\alpha_0[f],...,\alpha_{N(\bfp)-1}[f])$ the vector of coordinates (in the basis 
${\mathscr B}_{\C[\bfz]/(\bfp(\bfz))}^{\prec\prec}$) 
of the class in $\C[\bfz]/(\bfp(\bfz))$ of the polynomial 
$$
\frac{1}{(2i\pi)^n} 
\int_{\Gamma_{\rm Shilov}^+(\overline \Delta)}\, f(\bfs)\, \frac{\det [\bfB_\bfp(\bfs,\bfz)]\, ds_1 \wedge \cdots\wedge ds_n}{\prod_{j=1}^n p_j(\bfs)} \in \C[\bfz],  
$$
one gets the required result. 

\begin{rem}\label{remProp1}  
The polynomial (considered here in $\C[\bfz]$)   
\begin{equation}
\label{lag123}
{\rm Lag}[f] := 
\frac{1}{(2i\pi)^n} 
\int_{\Gamma_{\rm Shilov}^+(\overline \Delta)}\, f(\bfs)\, \frac{\det [\bfB_\bfp(\bfs,\bfz)]\, ds_1 \wedge \cdots\wedge ds_n}{\prod_{j=1}^n p_j(\bfs)}   
\end{equation}
depends only on the list of germs $[f_{w_1},...,f_{w_n}]$ of germs  
of $f$ respectively about each of the distinct points $w_j$ in 
$\bfp^{-1}(0)$. It then can be considered as a {\it Lagrange interpolator} of such list of germs, hence the terminology used here to denote it.   
More precisely let 
$\varphi\in \mathscr D(U,[0,1])$ be any test-function which equals identically $1$ about each point $w_j$ for $j=1,...,m$. It is worth to point out 
that ${\rm Lag}[f]$ expresses alternatively (independently of the choice of the Weil polyhedron $\Delta$) as 
\begin{equation}\label{expressionresidu}
\begin{split}   
& {\rm Lag}[f]= 
\Big\langle 
\Big(\bigwedge\limits_{j=1}^n 
\bar\partial(1/f_j)\Big)(\bfs)\,,\, f(\bfs)\,
\det [\bfB_\bfp(\bfs,\bfz)]\, \varphi(\bfs)\, ds_1 \wedge \cdots \wedge ds_n \Big\rangle \\ 
& = \sum\limits_{j=1}^m \mathscr Q_{w_j} 
(\partial/\partial \bfs) \big[ f(\bfs) 
\det [\bfB_\bfp(\bfs,\bfz)]\big] (w_j) \\
& = \sum\limits_{j=1}^m\;   
\sum\limits_{\bfell \in A_{w_j}}
\frac{\partial^{|\bfell|}}{\partial \bfs^\bfell}[f](w_j)\, 
\mathscr Q_{w_j,\bfell}(\partial/\partial \bfs)\big[\det [\bfB_\bfp(\bfs,\bfz)]\big](w_j)\\ 
& = \sum\limits_{j=1}^m\  
\sum\limits_{\bfell \in A_{w_j}}
\frac{\partial^{|\bfell|}}{\partial \bfs^\bfell}\big[\det [\bfB_\bfp(\bfs,\bfz)] \big](w_j)\, 
\mathscr Q_{w_j,\bfell}(\partial/\partial \bfs)[f](w_j), 
\end{split} 
\end{equation}
where the differential operators with complex coefficients 
$\mathscr Q_{w_j}(\partial/\partial \bfs)$ for $j=1,...,m$ 
(respectively the finite sets $A_{w_j} \in \N^n$ together with differential operators $\mathscr Q_{w_j,\bfell}$ for $j=1,...,m$ and $\bfell \in A_{w_j}$)  
are those introduced in 
\eqref{representationresidualcurrent} (respectively in \eqref{reformulation}). 
We refer here the reader for example  
to \cite{BGVY} or to the more up-to-date survey \cite{TsiY}.  
\end{rem}  
 
\end{proof} 

\begin{prop}\label{propWeil2} 
Let $\bfp$ and ${\mathscr B}_{\C[\bfs]/(\bfp)}^{\prec\prec}$ as above. 
Let $[h_{w_1},...,h_{w_m}]$ be a list of $m$ germs of holomorphic functions, each respectively about the zero $w_j$ of $\bfp$. There is a unique system of coordinates 
$\big(\alpha_0([\bfh_\bfw]),...,\alpha_{N(\bfp)-1}([\bfh_\bfw])\big)\in \C^n$ such that for each $j=1,...,m$, one has, as elements in the local ring $\mathscr O_{\C^n,w_j}$,  
$$
h_{w_j} - \Big(\bfs \mapsto \sum_{k=0}^{N(\bfp)-1} \alpha([\bfh_\bfw])\, \bfs^{\bfbeta_k}\Big)_{w_j} \in \mathscr O_{\C^n,w_j}\, p_{1,w_j}  + 
\cdots + \mathscr O_{\C^n,w_j}\, p_{n,w_j}.     
$$
\end{prop}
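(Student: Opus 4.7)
The plan is to reduce Proposition \ref{propWeil2} to Proposition \ref{propWeil1} by realizing the given datum $[h_{w_1},\ldots,h_{w_m}]$ as the collection of germs of a single holomorphic function on a (possibly disconnected) open neighborhood of $\bfp^{-1}(0)$. First, I would pick pairwise disjoint open neighborhoods $U_1,\ldots,U_m$ of $w_1,\ldots,w_m$, each small enough that $h_{w_j}$ admits a representative $\tilde h_j\in H(U_j)$. Setting $U:=U_1\sqcup\cdots\sqcup U_m$ (an open subset of $\C^n$ containing $\bfp^{-1}(0)$) and defining $h\in H(U)$ by $h|_{U_j}:=\tilde h_j$, the germ of $h$ at $w_j$ is then exactly $h_{w_j}$ for every $j$. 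Applying Proposition \ref{propWeil1} to $(U,h)$ produces unique coordinates $\alpha_0[h],\ldots,\alpha_{N(\bfp)-1}[h]$ such that $h-\sum_k \alpha_k[h]\,\bfs^{\bfbeta_k}$ lies in $\big(\sum_{j=1}^n H(U)\,p_j\big)_{\rm loc}$; passing to germs at each $w_j$ gives exactly the condition required by the statement, and one defines $\alpha_k([\bfh_\bfw]):=\alpha_k[h]$.

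Well-definedness, namely independence from the choice of the representatives $\tilde h_j$, I would derive from Remark \ref{remProp1}: the residual expression \eqref{expressionresidu} for the Lagrange interpolator ${\rm Lag}[h]\in\C[\bfz]$ writes it as a $\C$-linear combination of finite jets of $h$ at the points $w_j$, so ${\rm Lag}[h]$, and hence its coordinates $\alpha_k[h]$ in the monomial basis $\mathscr B_{\C[\bfs]/(\bfp)}^{\prec\prec}$, depend only on the germs $h_{w_j}$.

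For uniqueness, suppose two tuples $(\alpha_k)$ and $(\tilde\alpha_k)$ both solve the interpolation problem, and form the polynomial $P:=\sum_k(\alpha_k-\tilde\alpha_k)\,\bfs^{\bfbeta_k}\in\C[\bfs]$. Subtracting the two representations, the germ of $P$ at each $w_j$ lies in $(\bfp)_{w_j}$, so the duality characterization \eqref{duality2} on any open neighborhood $U\supset\bfp^{-1}(0)$ places $P$ in $\big(\sum_{j=1}^n H(U)\,p_j\big)_{\rm loc}$; the uniqueness clause of Proposition \ref{propWeil1} applied to the polynomial $P\in H(U)$ then forces $\alpha_k[P]=0$ for every $k$, and because $P$ is itself already written in the basis $\mathscr B_{\C[\bfs]/(\bfp)}^{\prec\prec}$, this gives $\alpha_k-\tilde\alpha_k=0$. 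I do not anticipate a substantial obstacle: the one step that would usually require care, namely gluing local germs into a single holomorphic function, is trivialized here by working on the disconnected open set $U=\bigsqcup_j U_j$, so no Cousin- or Mittag-Leffler-type argument is needed.
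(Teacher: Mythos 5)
Your proof is correct and follows essentially the same route as the paper: realize the germs as a single holomorphic function on a disjoint union of small neighborhoods of the points $w_j$, apply Proposition \ref{propWeil1}, and invoke Remark \ref{remProp1} (the residual/Lagrange expression) to see that the coordinates depend only on the germs. Your explicit uniqueness argument via the basis property of $\mathscr B_{\C[\bfs]/(\bfp)}^{\prec\prec}$ and duality is the same reasoning the paper relies on, just spelled out in more detail.
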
 

\begin{proof} Take $U$ as a union of balls with infinitesimal small radii about each $w_j$ (so that, for any $j=1,...,m$, the germ $h_{w_j}\in \mathscr O_{\C^n,w_j}$  admits a representant $h_j$ in the connected component of $U$ that contains $w_j$). 
Take now the holomorphic function $f~: U \rightarrow \C$ defined by $f_{|B_j}=h_j$ for $j=1,...,m$ and then conclude appealing to Proposition \ref{propWeil1}. It also follows from Remark \ref{remProp1} that the system of coordinates 
$\big(\alpha_0([\bfh_\bfw]),...,\alpha_{N(\bfp)-1}([\bfh_\bfw])\big)$ 
is that of the class of the polynomial (considered here in $\C[\bfz]$) 
\begin{equation} 
\label{expressionresidu2} 
\begin{split}
& {\rm Lag}[h_{w_1},...,h_{w_m}] \\
& = \sum\limits_{j=1}^m\;   
\sum\limits_{\bfell \in A_{w_j}}
\frac{\partial^{|\bfell|}}{\partial \bfs^\bfell}[h_{w_j}](w_j)\, 
\mathscr Q_{w_j,\bfell}(\partial/\partial \bfs)\big[\det [\bfB_\bfp(\bfs,\bfz)]\big](w_j)\\ 
& = \sum\limits_{j=1}^m\  
\sum\limits_{\bfell \in A_{w_j}}
\frac{\partial^{|\bfell|}}{\partial \bfs^\bfell}\big[\det [\bfB_\bfp(\bfs,\bfz)] \big](w_j)\, 
\mathscr Q_{w_j,\bfell}(\partial/\partial \bfs)[h_{w_j}](w_j) 
\end{split} 
\end{equation}        
in the basis ${\mathscr B}_{\C[\bfz]/(\bfp(\bfz))}^{\prec\prec}$.     
\end{proof} 

\section{Two non-standard interpolation problems} 

Let $\bfp=(p_1,...,p_n)$ be a quasi-regular sequence in $\C[\bfs]$ and 
$$
{\mathscr B}_{\C[\bfs]/(\bfp)}^{\prec\prec} = \{\dot\bfs^{\bfbeta_k}\,;\, k=0,...,N(\bfp)-1\}
$$  
be a monomial basis of the $N(\bfp)$-dimensional 
$\C$-vector space 
$\C[\bfs]/(\bfp)$ which has previously been obtained thanks to the search for 
a Gr\"obner basis for $(\bfp)$ with respect to the prescribed ordering 
$\prec\prec$ on monomials in $\C[\bfs]$ (as recalled in section \ref{section1}). 
\vskip 2mm

We will denote as in section \ref{section1} as $\bfQ_{\bfp}[\mathscr B_{\C[\bfs]/(\bfp)}^{\prec\prec}]$ (see \eqref{matrixQp}) the matrix of the quadratic non-degenerated form \eqref{quadraticform} which stands as the residual generator of 
the module ${\rm Hom}_\C(\C[\bfs]/(\bfp),\C)$ (equipped with its structure of 
$\C[\bfs]/(\bfp)$-module as described in section \ref{section1}) constructed from the given quasi-regular sequence $(p_1,...,p_n)$ of generators of the polynomial zero dimensional ideal $(\bfp)$. 
The action of this generator is described as seen in section \ref{section1} by a   
standard list of N\oe therian operators 
\begin{equation}
{\rm Noeth}_\bfp[(\bfp)] := \Big\{\mathscr Q_{w_j,\bfell}(\partial/\partial \bfs)_{|w_j}\,;\, 
j=1,...,m,\ \bfell \in A_{w_j}\Big\} 
\end{equation} 
(see Definition \ref{noetherian}).
\vskip 2mm

We can now formulate (and indicate how to solve) the two following non-standard interpolation problems inspired by Problem 4.1 formulated in \cite{AJLV15} in the univariate case. 

\begin{problem}\label{problem1} 
Let $\bfp^{-1}(0)=\{w_1,...,w_m\}$ and $1\leq \mu \leq m$. Let $U$ be an open subset of $\C^n$ that contains $\{w_1,...,w_\mu\}$. Let   
$a_{j,\bfell}$ ($j=1,...,\mu$, $\bfell \in A_{w_j}$), together with $c$, be $(\sum_{j=1}^\mu \# A_{w_j}) +1$ given complex numbers. Describe the $\C$-affine space of functions $f~: U \rightarrow \C$ which are holomorphic in $U$ and moreover satisfy
\begin{equation}\label{eqproblem1} 
\sum\limits_{j=1}^\mu \sum\limits_{\bfell\in A_{w_j}} 
a_{j,\bfell}\, \mathscr Q_{w_j,\bfell}[f](w_j) = c. 
\end{equation}       
\end{problem} 

\begin{problem}\label{problem2} Let $w_1,...,w_\mu$ be $\mu$ distinct points 
in $\C^n$, together with $\mu$ elements $\bfnu_1$,...,$\bfnu_\mu$ in $(\N^*)^n$ (prescribed multi-vectors of multiplicities).  
Let $U$ be an open subset of $\C^n$ that contains $\{w_1,...,w_\mu\}$
Let $a_{j,\bfell}$ ($j=1...,\mu$, $\bfell \in \N^n$ such that 
$\bfell \prec \bfnu_j-\bfone$), together with $c$, be 
$\big(\sum_{\ell=1}^\mu \prod_{j=1}^n \nu_{\ell,j}\big)  + 1$ given complex numbers. 
Describe the $\C$-affine space of functions $f~: U \rightarrow \C$ which are holomorphic in $U$ and moreover satisfy
\begin{equation}\label{eqproblem2} 
\sum\limits_{j=1}^\mu \sum\limits_{\{\bfell\in \N^n\,;\, 
\bfell \prec \bfnu_j-\bfone\}} a_{j,\bfell}\, \Big(\frac{\partial^{|\bfell|}}{\partial \bfs^{\bfell}}\Big)[f](w_j) 
= c. 
\end{equation}       
\end{problem}     
   
\vskip 2mm

We start by indicating how to solve Problem \ref{problem1} in the particular case 
$\mu=m$. Recall that for each $j=1,...,m$, for each $\bfell \in A_{w_j}$, 
${\rm Supp}\, (\mathscr Q_{w_j,\bfell}) \subset A_{w_j}$, so that 
$$
\mathscr Q_{w_j,\bfell} (\partial/\partial \bfs) = 
\sum\limits_{\bflambda \in A_{w_j}} \tau_{j,\bfell,\bflambda} 
\, \frac{\partial^{|\bflambda|}}{\partial \bfs^{\bflambda}}  
$$
for some complex coefficients $\tau_{j,\bfell,\bflambda}$. 
  
\begin{lem}\label{lemma1}
Let $\{w_1,...,w_m\}=\bfp^{-1}(0)$, $U$ be 
an open subset of $\C^n$ containing $\bfp^{-1}(0)$ and the $a_{j,\bfell}$ ($j=1,...,m$, $\bfell \in A_{w_j}$), together with $c$, be complex numbers. 
For each $j=1,...,m$, let $h_{w_j}^{\bfa}$ be the germ in $\mathscr O_{\C^n,w_j}$ of 
$\bfs \longmapsto \sum_{\bflambda \in A_j} a_{j,\bflambda}\, (\bfs-w_j)^{\bflambda}/\bflambda!$  and $[\bfh_\bfw^{\bfa}]=[h_{w_1}^{\bfa},...,h_{w_m}^{\bfa}]$ .  
The following alternative holds~: 
\begin{itemize} 
\item either the coordinate system $\big(\alpha_0([\bfh_\bfw^\bfa]),...,\alpha_{N(\bfp)-1}([\bfh_\bfw^\bfa])\big)$ introduced in Proposition 
\ref{propWeil2} is the null system, which amounts to say that 
\begin{equation}\label{systemnull} 
\sum\limits_{\bflambda \in A_{w_j}} \tau_{j,\bfell,\bflambda}\, a_{j,\bflambda} = 0 
\quad \forall\, j=1,...,m,\ \forall\, \bfell \in A_{w_j},    
\end{equation} 
in which case the set of holomorphic functions $f~: U \rightarrow \C$ satisfying 
\eqref{eqproblem1} is empty when $c\not=0$ and is the whole space $H(U)$ when 
$c=0$~;    
\item either the coordinate system $\big(\alpha_0([\bfh_\bfw^\bfa]),...,\alpha_{N(\bfp)-1}([\bfh_\bfw^\bfa])\big)$ introduced in Proposition 
\ref{propWeil2} is non-zero, in which case a function $f\in H(U)$ satisfies \eqref{eqproblem1} if and only if 
\begin{equation}\label{systemnonnull}  
\begin{bmatrix} \alpha_{0}[f] & \dots & \alpha_{N(\bfp)-1)}[f]\end{bmatrix}\cdot 
\bfQ_{\bfp}[\mathscr B_{\C[\bfs]/(\bfp)}^{\prec\prec}]\, 
\begin{bmatrix}\alpha_0([\bfh_\bfw^{\bfa}]) \\ 
\vdots \\ 
\alpha_{N(\bfp)-1} ([\bfh_\bfw^\bfa])
\end{bmatrix} = c,   
\end{equation} 
which means that the vector $(\alpha_{0}[f],...,\alpha_{N(\bfp)-1)}[f])$ of the coordinates of $f$ in $H(U)/(\sum_1^n H(U)\, p_j)_{\rm loc}$  
lies in a specific affine hyperplane $\Pi^{\bfa}$ of $\C^N$ since 
the quadratic form \eqref{quadraticform} is non-degenerated.   
\end{itemize} 
\end{lem}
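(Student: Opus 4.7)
The plan is to interpret the interpolation functional $\Lambda^\bfa : f \mapsto \sum_{j, \bfell \in A_{w_j}} a_{j, \bfell}\, \mathscr{Q}_{w_j, \bfell}[f](w_j)$ as a single residue pairing realized by the non-degenerate quadratic form $\bfQ_\bfp$, and then read off both cases of the dichotomy directly from the matrix \eqref{matrixQp}.

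First, I would pick a cutoff $\chi \in \mathscr{D}(U, [0, 1])$ equal to $1$ on a neighborhood of $\bfp^{-1}(0)$, together with a smooth function $\psi \in \mathscr{D}(U, \C)$ whose germ at $w_j$ coincides with $h_{w_j}^\bfa$ for every $j = 1, \dots, m$ (such a $\psi$ exists because the points $w_j$ are distinct). By the explicit shape of $h_{w_j}^\bfa$ we have $\partial^{|\bfell|}/\partial \bfs^\bfell[\psi](w_j) = a_{j, \bfell}$ for $\bfell \in A_{w_j}$, so the first of the two identities in \eqref{reformulation} applied with $\varphi = \chi \psi$ immediately gives
$$\Lambda^\bfa(f) = \Big\langle \bigwedge_{j=1}^n \bar\partial(1/p_j),\, f(\bfs)\, (\chi \psi)(\bfs)\, ds_1 \wedge \cdots \wedge ds_n \Big\rangle.$$

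Next, I would pass to coordinates in the monomial basis on both sides of this pairing. By Proposition \ref{propWeil1}, $f$ differs locally near $\bfp^{-1}(0)$ from the polynomial $P_f(\bfs) = \sum_k \alpha_k[f]\, \bfs^{\bfbeta_k}$ by an element of $\big(\sum_{j=1}^n H(U) p_j\big)_{\rm loc}$; by Proposition \ref{propWeil2} applied to the holomorphic germs $[\bfh_\bfw^\bfa]$, the polynomial $P_\bfa(\bfs) = \sum_k \alpha_k([\bfh_\bfw^\bfa])\, \bfs^{\bfbeta_k}$ represents those germs modulo each local ideal $(p_1, \dots, p_n)_{w_j}$, and since the germ of $\psi$ at $w_j$ is exactly $h_{w_j}^\bfa$, the same holds for $\psi$ in place of $P_\bfa$. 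Because the current $\bigwedge_{j=1}^n \bar\partial(1/p_j)$ is supported on $\bfp^{-1}(0)$, the local-to-global ideal criterion \eqref{duality1} lets us replace $f(\bfs)(\chi \psi)(\bfs)$ in the pairing by $P_f(\bfs)P_\bfa(\bfs)$ (times a suitable cutoff) without changing the value, so that
$$\Lambda^\bfa(f) = \sum_{k_1, k_2} \alpha_{k_1}[f]\, \alpha_{k_2}([\bfh_\bfw^\bfa])\, \Res\!\begin{bmatrix} \bfs^{\bfbeta_{k_1} + \bfbeta_{k_2}}\, ds_1 \wedge \cdots \wedge ds_n \\ p_1(\bfs), \dots, p_n(\bfs) \end{bmatrix},$$
which by \eqref{matrixQp} is exactly the left-hand side of \eqref{systemnonnull}.

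Finally, the dichotomy follows from the non-degeneracy of $\bfQ_\bfp$. If the vector $\big(\alpha_k([\bfh_\bfw^\bfa])\big)_k$ vanishes, $\Lambda^\bfa \equiv 0$ and \eqref{eqproblem1} is solvable iff $c = 0$, and then by any $f \in H(U)$; this vanishing is equivalent, via Proposition \ref{propWeil2} and the dual characterization \eqref{duality2}, to $\mathscr{Q}_{w_j, \bfell}[h_{w_j}^\bfa](w_j) = 0$ for every $(j, \bfell)$, which by expanding $\mathscr{Q}_{w_j, \bfell} = \sum_\bflambda \tau_{j, \bfell, \bflambda}\, \partial^\bflambda/\partial \bfs^\bflambda$ and observing that $\partial^\bflambda/\partial \bfs^\bflambda[h_{w_j}^\bfa](w_j) = a_{j, \bflambda}$ (resp.\ $0$) for $\bflambda \in A_{w_j}$ (resp.\ otherwise) is precisely \eqref{systemnull}. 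In the complementary case, invertibility of $\bfQ_\bfp[\mathscr{B}_{\C[\bfs]/(\bfp)}^{\prec\prec}]$ forces the row vector $\bfQ_\bfp[\mathscr{B}_{\C[\bfs]/(\bfp)}^{\prec\prec}] \cdot \big(\alpha_k([\bfh_\bfw^\bfa])\big)_k^T$ to be non-zero, so \eqref{systemnonnull} cuts out an affine hyperplane $\Pi^\bfa$ in $\C^{N(\bfp)}$, and the solution set of \eqref{eqproblem1} in $H(U)$ is the preimage of $\Pi^\bfa$ under the coordinate map $f \mapsto \big(\alpha_k[f]\big)_k$ of Proposition \ref{propWeil1}. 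The most delicate point I anticipate is the passage in the second step from the smooth cutoff object $\chi \psi$ to the purely algebraic pairing on $\C[\bfs]/(\bfp)$: it requires one to track carefully that both ideal replacements (for $f$ via Proposition \ref{propWeil1} and for $\psi$ via Proposition \ref{propWeil2}) leave the residue current pairing invariant, which is exactly the content of \eqref{duality1} combined with the local support of $\bigwedge_{j=1}^n \bar\partial(1/p_j)$.
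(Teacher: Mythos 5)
Your proposal is correct and follows essentially the same route as the paper: rewrite the interpolation functional as a pairing of $f$ against the residue current via \eqref{reformulation} (your $\chi\psi$ is just the paper's $\sum_j \psi_j h^{\bfa}_{w_j}$), replace $f$ and the germ data by their Lagrange representatives modulo the ideal using \eqref{duality1} and the support of the current, identify the result with the bilinear expression $\alpha[f]^{T}\,\bfQ_{\bfp}\,\alpha([\bfh^{\bfa}_{\bfw}])$ via \eqref{matrixQp}, and settle the dichotomy by non-degeneracy of $\bfQ_{\bfp}$ together with the N\oe therian-operator characterization \eqref{duality2} of the null case. No gaps worth flagging; your treatment of the first alternative is in fact slightly more explicit than the paper's.
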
 

\begin{proof} For each $j=1,...,m$, let 
$\psi_j\in \mathscr D(U,[0,1])$ with support in an arbitrary small 
neighborhood of $w_j$ (which does not contain any other zero of $\bfp$ and is such that the germ $h^\bfa_{w_j}$ admits a representant still denoted as $h^\bfa_{w_j}$ in it) such that furthermore   
$\psi_j\equiv 1$ about $w_j$. It follows from \eqref{reformulation} and 
\eqref{duality1} that one can rewrite the left-hand side of \eqref{eqproblem1} as 
\begin{multline*} 
\sum\limits_{j=1}^m 
\Big\langle \Big(\bigwedge_{j=1}^n \bar\partial(1/p_j)\Big)(\bfs)\,,\, f(\bfs)\, h_{w_j}^\bfa(\bfs)\, \psi_j(\bfs)\,  ds_1 \wedge \dots \wedge ds_n\Big\rangle \\ 
= \Big\langle \Big(\bigwedge_{j=1}^n \bar\partial(1/p_j)\Big)(\bfs)\,,\, {\rm Lag}[f](\bfs)\, {\rm Lag}[\bfh^{\bfa}_\bfw](\bfs)\, \Big(\sum_{j=1}^m \psi_j(\bfs)\Big)\,  ds_1 \wedge \dots \wedge ds_n\Big\rangle \\ 
= {\rm Res}\, \begin{bmatrix} 
{\rm Lag}[f](\bfs)\, {\rm Lag}[\bfh^{\bfa}_\bfw](\bfs)\, ds_1 \wedge \cdots \wedge ds_n \\ 
p_1(\bfs),...,p_n(\bfs)\end{bmatrix} \\ 
= \begin{bmatrix} \alpha_{0}[f] & \dots & \alpha_{N(\bfp)-1)}[f]\end{bmatrix}\cdot 
\bfQ_{\bfp}[\mathscr B_{\C[\bfs]/(\bfp)}^{\prec\prec}]\, 
\begin{bmatrix}\alpha_0([\bfh_\bfw^{\bfa}]) \\ 
\vdots \\ 
\alpha_{N(\bfp)-1} ([\bfh_\bfw^\bfa])
\end{bmatrix}. 
\end{multline*} 
We now conclude using the fact that the quadratic form $\bfQ_\bfp$ is non degenerated. The first situation in the alternative corresponds precisely (thanks to 
the role of the N\oe therian operators in the realisation of duality, see \eqref{duality2}) to the fact that the system of linear relations \eqref{systemnonnull} holds.    
\end{proof} 

\vskip 2mm
We may now state the solution to Problem \ref{problem1}. 

\begin{thm}\label{thmPb1} Let $\bfp^{-1}(0)=\{w_1,...,w_m\}$ and $1\leq \mu \leq m$. Let $U$ be an open subset of $\C^n$ that contains $\{w_1,...,w_\mu\}$. Let   
$a_{j,\bfell}$ ($j=1,...,\mu$, $\bfell \in A_{w_j}$), together with $c$, be $(\sum_{j=1}^\mu \# A_{w_j}) +1$ given complex numbers. For each $j=1,...,\mu$, let $h_{w_j}^{\bfa}$ be the germ in $\mathscr O_{\C^n,w_j}$ of 
$\bfs \longmapsto \sum_{\bflambda \in A_j} a_{j,\bflambda}\, (\bfs-w_j)^{\bflambda}/\bflambda!$. Let 
$$
[\bfh_\bfw^{\bfa}]= 
[h_{w_1}^\bfa,...,h_{w_\mu}^\bfa,0_{w_{\mu+1}},...,0_{w_n}]. 
$$ 
The following alternative then holds~: 
\begin{itemize} 
\item either the coordinate system $\big(\alpha_0([\bfh_\bfw^\bfa]),...,\alpha_{N(\bfp)-1}([\bfh_\bfw^\bfa])\big)$ introduced in Proposition 
\ref{propWeil2} is the null system, which amounts to say that 
\begin{equation}\label{systemnull1} 
\sum\limits_{\bflambda \in A_{w_j}} \tau_{j,\bfell,\bflambda}\, a_{j,\bflambda} = 0 
\quad \forall\, j=1,...,\mu,\ \forall\, \bfell \in A_{w_j},    
\end{equation} 
in which case the set of holomorphic functions $f~: U \rightarrow \C$ satisfying 
\eqref{eqproblem1} is empty when $c\not=0$ and is the whole space $H(U)$ when 
$c=0$~;    
\item either the coordinate system $\big(\alpha_0([\bfh_\bfw^\bfa]),...,\alpha_{N(\bfp)-1}([\bfh_\bfw^\bfa])\big)$ introduced in Proposition 
\ref{propWeil2} is non-zero, in which case a function $f\in H(U)$ satisfies \eqref{eqproblem1} if and only if \eqref{systemnonnull} holds,   
which means that the vector $(\alpha_{0}[f],...,\alpha_{N(\bfp)-1)}[f])$ of the coordinates of $f$ in $H(U)/(\sum_1^n H(U)\, p_j)_{\rm loc}$  
lies in a specific affine hyperplane $\Pi^{\bfa}$ of $\C^N$ since 
the quadratic form \eqref{quadraticform} is non-degenerated.    
\end{itemize} 
\end{thm}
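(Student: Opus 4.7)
The plan is to reduce Theorem \ref{thmPb1} to the special case $\mu=m$ already treated in Lemma \ref{lemma1}, by completing the prescribed data with trivial data at the zeros of $\bfp$ where no interpolation is demanded. Concretely, I extend the collection $(a_{j,\bfell})_{1\leq j\leq \mu,\, \bfell\in A_{w_j}}$ by setting $a_{j,\bfell}:=0$ for every $j\in\{\mu+1,\ldots,m\}$ and every $\bfell\in A_{w_j}$, so that the associated germ $h_{w_j}^\bfa$ becomes the zero germ at $w_j$ for $j>\mu$. This matches exactly the list $[\bfh_\bfw^\bfa]=[h_{w_1}^\bfa,\ldots,h_{w_\mu}^\bfa,0_{w_{\mu+1}},\ldots,0_{w_m}]$ appearing in the statement.

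The next step is to observe that, with these extended data, the extended left-hand side
\[ \sum_{j=1}^{m} \sum_{\bfell\in A_{w_j}} a_{j,\bfell}\,\mathscr Q_{w_j,\bfell}[f](w_j) \]
coincides identically with the original left-hand side of \eqref{eqproblem1}, since all summands newly introduced vanish term-by-term. The set of $f\in H(U)$ satisfying \eqref{eqproblem1} is therefore unchanged under this extension (after possibly shrinking $U$ so as to contain every $w_j$, which is harmless because the extra functionals act trivially).

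At this stage I simply apply Lemma \ref{lemma1} to the extended datum. The alternative it delivers is exactly the alternative of the theorem: the condition \eqref{systemnull1} is precisely the non-trivial part of \eqref{systemnull}, since the equalities indexed by $j>\mu$ are tautologies; and in the non-degenerate case, the affine-hyperplane equation \eqref{systemnonnull} transfers verbatim, with the coordinates $\big(\alpha_k([\bfh_\bfw^\bfa])\big)$ computed from the padded germ list via Proposition \ref{propWeil2}.

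The only point that requires a brief verification is that the Lagrange interpolator ${\rm Lag}[\bfh_\bfw^\bfa]$ picks up no contribution from $w_{\mu+1},\ldots,w_m$; this is immediate from formula \eqref{expressionresidu2}, since every partial derivative of the zero germ vanishes at the corresponding point. I do not anticipate any genuine obstacle here: the theorem is essentially a zero-padding repackaging of Lemma \ref{lemma1}, and the whole analytic and algebraic content (non-degeneracy of the quadratic form $\bfQ_\bfp$, realization of duality via Nœtherian operators, and the Cauchy--Weil identity underlying Proposition \ref{propWeil1}) has already been put in place.
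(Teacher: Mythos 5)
Your core idea — pad the data with $a_{j,\bfell}=0$ for $j>\mu$ and reduce to Lemma~\ref{lemma1} — is exactly the paper's strategy, and the observation that the Lagrange interpolator \eqref{expressionresidu2} picks up no contribution from the zero germs is correct. But the reduction is not as mechanical as you claim, and the parenthetical ``after possibly shrinking $U$ so as to contain every $w_j$'' is where the proposal goes wrong: shrinking $U$ cannot make it contain more points, and the hypothesis of Theorem~\ref{thmPb1} only asks that $U$ contain $\{w_1,\ldots,w_\mu\}$, while Lemma~\ref{lemma1} requires $\bfp^{-1}(0)=\{w_1,\ldots,w_m\}\subset U$. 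So for $\mu<m$ the lemma simply does not apply to $f\in H(U)$ as given.

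The genuine work is to bridge this mismatch, and it splits into two cases that the paper treats separately. If no $w_j$ with $j>\mu$ lies on $\partial U$, one \emph{enlarges} $U$ to $\widetilde U = U \cup \bigcup_{j>\mu} B(w_j,\varepsilon_j)$ with $\varepsilon_j<\operatorname{dist}(w_j,\overline U)$, and then checks that solutions of \eqref{eqproblem1} on $U$ correspond exactly to solutions of the padded problem on $\widetilde U$ (extend $f$ arbitrarily on the new balls in one direction, restrict in the other). If some $w_j$ with $j>\mu$ lies on $\partial U$, even this fails, since $f\in H(U)$ need not extend across $w_j$; the paper handles it by exhausting $U$ by relatively compact opens $U_k\Subset U$, applying the previous case on each $\widetilde U_k$, and using that both the coordinate vector $(\alpha_k[f])$ and the condition governing the alternative depend only on germs at $w_1,\ldots,w_\mu$ and not on $k$. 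Your proposal contains neither the enlargement nor the exhaustion step, so as written it proves the theorem only in the trivial subcase $\bfp^{-1}(0)\subset U$.
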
 

\begin{proof} One may assume that $\mu<n$ since the result is already proved when $\mu=m$ (Lemma \ref{lemma1}). 
\\
\noindent 
Let us first assume that $\partial U$ does not contain any of the points 
$w_j$ such that $\mu < j\leq m$. Let $\widetilde U$ be the union of $U$ with open balls 
$B(w_j,\varepsilon_j)$, $j=\mu+1,...,n$, where $\varepsilon_j$ is strictly smaller than the distance from $w_j$ to $\overline U$. Any holomorphic function 
$f~: U \rightarrow \C$ which satisfies \eqref{eqproblem2} can be considered as 
$\widetilde f_{|\widetilde U}$ where $\widetilde f~: \widetilde U \rightarrow \C$ satisfies \eqref{eqproblem1} with $\tilde a_{j,\bfell}=a_{j,\bfell}$ for 
any $j=1,...,\mu$ and any $\bfell\in A_{w_j}$, 
$\tilde a_{j,\bfell}=0$ for any index $j=\mu+1,...,m$ and any 
$\bfell\in A_{w_j}$, $\tilde c=c$. Conversely, given any such holomorphic solution  
$\tilde f~: \tilde U \rightarrow \C$ of \eqref{eqproblem1} in 
$\widetilde U$ (with the $a_{j,\bfell}$ replaced by $\tilde a_{j,\bfell}$), it restricts to $U$ as a solution of \eqref{eqproblem2}. The conclusion of Theorem 
\ref{thmPb1} follows then from that of Lemma \ref{lemma1}. 
\\
\noindent 
Consider now the case when $\partial U$ may contain some 
$w_j$ for $j=\mu+1,...,m$. Let $(U_k)_{k\geq 0}$ be an increasing sequence of open subsets such that $\{w_1,...,w_\mu\} \subset U_k\subset \overline{U_k} \subset U$ which exhausts $U$.         
It is equivalent to say that $f~: U \rightarrow \C$ is solution of \eqref{eqproblem1} in $U$ or that for any $k\in \N$, its restriction 
$f_{|U_k}$ is solution of \eqref{eqproblem1} in $U_k$ (the data $a_{w_j,\bfell}$ and $c$ remaining unchanged). For any $k\geq 0$, we showed that the alternative proposed 
in the statement of Theorem \ref{thmPb1} hold. As pointed out in Remark 
\ref{remProp1}, we also know that the coordinate system 
$(\alpha_0[f],...,\alpha_{N(\bfp)-1}[f])$ ($f$ being arbitrarily continued in 
$\widetilde U_k = U_k \cup \bigcup_{j=\mu+1}^m B(w_j,d(w_j,\overline{U_k})/2)$)
does depend only of the germs of $f$ at the points $w_j$ for $j=1,...,\mu$. 
On the other hand the condition on $[\bfh^\bfa_\bfw]$ which governs the alternative 
proposed in the statement of Theorem \ref{thmPb1} (when $U=U_k)$ does not depend on $k$. Hence this alternative still holds in the limit case $U_\infty=U$. Theorem \ref{thmPb1} is thus proved in general.         
\end{proof} 
\vskip 2mm 

Consider as an example the particular situation where $\bfp$ is a sequence of univariate monic polynomials respectively in the variables $s_1,...,s_n$, as in the series of Examples \ref{example1} till \ref{example4}. In this case, the $p_j$ being as in \eqref{univariate}, the 
set $A_w$ which is related to the point $w=(\xi_{1,\kappa_1},...,\xi_{n,\kappa_n})
\in \bfp^{-1}(0)$ is 
$$
A_w = \{\bfell\in \N^n\,;\, \bfell \prec \bfnu_w(\bfp) := (\nu_{1,\kappa_1},...,\nu_{n,\kappa_n})-\bf1\}, 
$$   
with cardinal $\prod_{\ell=1}^n \nu_{\ell,\kappa_\ell}$. Since one has in this case 
$$
\sum_{j=1}^m \# A_{w_j} = N(\bfp) = \dim_\C(\C[\bfs]/(\bfp)), 
$$
the first alternative in Lemma \ref{lemma1} leads in this particular case to 
$a_{w_j,\bflambda}=0$ for any $j=1,...,m$, for any $\bflambda \in A_{w_j}$ for dimension reasons. This holds also for which what concerns the first alternative in Theorem \ref{thmPb1} in the case $1\leq \mu<m$~: it boils down in this case to the conditions 
$a_{j,\bflambda}=0$ for any $j=1,...,\mu$, for any $\bfell\in \N^n$ such that 
$\bfell\prec \bfnu_w(\bfp)-\bfone$.  
\vskip 2mm

Consider now $\mu\geq 1$ distinct points in $\C^n$, paired with vectors of prescribed multiplicities $\bfnu_1,\cdots,\bfnu_\mu$ in $(\N^*)^n$ as in Problem \ref{problem2}. If $w_j=(\xi_{j,1},...,\xi_{j,n})$, let us form the $n$ univariate monic polynomials 
$$
p_j(s_j) := \prod\limits_{\ell = 1}^\mu (s_j - \xi_{\ell,j})^{\nu_{\ell,j}}\in \C[\bfs_j],\quad j=1,...,n.  
$$
Let $\bfd =(\deg p_1,...,\deg p_n)$, where 
$\deg p_j=\sum_{\ell=1}^\mu \nu_{\ell,j}$ for $j=1,...,n$. A monomial basis 
for $\C[\bfs]/(\bfp)$ is provided thanks to Euclid's algorithm in the separated variables $s_1,...,s_n$ as 
$$
\mathscr B_{\bfp}^{\rm euclid} = \{\dot \bfs^{\bfk}\,;\, \bfk\in \N^n\ {\rm with}\ \bfk \prec \bfd-\bf1\}. 
$$
From now on, one organizes this basis with respect to the lexicographical order on the multi-exposants of monomials in $\C[\bfs]$. Keeping to such ordering, let 
\begin{equation}\label{matrixfqpart} 
\bfQ_\bfp^{\rm euclid} := \Bigg[ {\rm Res}\, \begin{bmatrix} 
\bfs^{\bfk_1 + \bfk_2}\, ds_1 \wedge \cdots \wedge ds_n\\ 
p_1(s_1),...,p_n(s_n) \end{bmatrix}\Bigg]_{
\stackrel{\bfk_1,\bfk_2\in \N^n}
{\bfk_1,\bfk_2 \prec \bfd-\bf1}}. 
\end{equation}  
The $(\bfk_1,\bfk_2)$ entry of such a matrix is obtained as the coefficient 
$\gamma_{\bfk_1,\bfk_2,\bfd-\bf1}$ of 
$\bfs^{\bfd -\bf1}$ in the expansion as a geometric series about $\bf0$ of 
$$
\bfs \mapsto \bfs^{\bfk_1 + \bfk_2}\, \prod\limits_{j=1}^n 
\Big(\frac{1} 
{1+\frac{p_j(s_j)-s_j^{d_j}}{s_j^{d_j}}}\Big) = \sum\limits_{\{\bfkappa \in\Z^n\,;\, \kappa\prec \bfk_1+\bfk_2\}} 
\gamma_{\bfk_1,\bfk_2,\bfkappa}\, \bfs^{\bfkappa}. 
$$ 
For each $j=1,...,\mu$, let $\psi_j\in \mathscr D(\C^n,[0,1])$ be a test-function which is identically equal to $1$ near $w_j$ and identically equal to $0$ about 
any point in $\bfp^{-1}(0)\setminus \{w_j\}$. Given a list $\bfa= 
\{a_{j,\bfell}\,;\, j=1,...,\mu,\ \bfell\in \N^n \ {\rm with}\  \bfell \prec \bfnu_j-\bf1\}$, consider the Lagrange interpolator ${\rm Lag}[\bfh_{\bfw}^{\bfa}]\in \C[\bfz]$ defined as 
\begin{multline}\label{laginterpfinal}  
{\rm Lag}[\bfh_{\bfw}^{\bfa}](\bfz) 
 = \sum\limits_{j=1}^\mu 
\Big\langle 
\Big(\bigwedge\limits_{j=1}^n 
\bar\partial(1/p_j)\Big)(\bfs)\,,\\  
\Big(\sum\limits_{\bfell\prec 
\bfnu_j-\bf1} 
a_{j,\bfell} 
\, \frac{(\bfs-w_j)^{\bfell}}{\bfell!}\Big)\, 
\Big(\prod\limits_{j=1}^n 
\frac{p_j(z_j) - p_j(s_j)}{z_j-s_j}\Big)\, 
\psi_j(\bfs)\, ds_1 \wedge \cdots \wedge ds_n\Big\rangle \\ 
= \sum\limits_{\{\bfk\in \N^n\,;\, \bfk \prec \bfd-\bfone\}} 
\tau_{\bfk} (\bfa)\, \bfz^\bfk.       
\end{multline}     

One can now state the following result with respect to Problem \ref{problem2}. 

\begin{thm} Let $w_1,...,w_\mu$ be $\mu$ distinct points 
in $\C^n$, together with $\mu$ elements $\bfnu_1$,...,$\bfnu_\mu$ in $(\N^*)^n$. 
Let $U$ be an open subset of $\C^n$ that contains $\{w_1,...,w_\mu\}$. 
Let $a_{j,\bfell}$ ($j=1...,\mu$, $\bfell \in \N^n$ such that 
$\bfell \prec \bfnu_j-\bfone$), together with $c$, be 
$\big(\sum_{\ell=1}^\mu \prod_{j=1}^n \nu_{\ell,j}\big)  + 1$ given complex numbers 
such that the $a_{j,\bfell}$ (for $j=1,...,\mu$, $\bfell\prec \bfnu_j$) are not all equal to $0$. An holomorphic function $f~: U \rightarrow \C$ satisfies 
\eqref{eqproblem2} if and only if there are coefficients $\alpha_{\bfk}$, 
$\bfk \prec \bfd$, where $d_j = \sum_{\ell=1}^\mu \nu_{\ell,j}$ for $j=1,...,\mu$,  
such that 
$$
f(s) = \sum\limits_{\{\bfk\in \N^n\,;\, \bfk\prec \bfd-\bfone\}} \alpha_\bfk\, \bfs^{\bfk} + g(\bfs)  
$$
where $g$ is an holomorphic function in $U$ which belongs locally to the ideal 
generated by the univariate polynomials $p_j(s_j)=\sum_{\ell=1}^\mu 
(s_j-w_{\ell,j})^{\nu_{\ell,j}}$ ($j=1,...,n$) and 
$(\alpha_\bfk)_{\bfk\prec \bfd-\bfone}$ satisfies 
$$
\begin{bmatrix} \alpha_{\bfzero} & \cdots & \alpha_{\bfd-\bf1}\end{bmatrix}
\cdot \bfQ_\bfp^{\rm  euclid} \cdot \begin{bmatrix} \tau_{\bf0}(\bfa) \\ 
\vdots \\ \tau_{\bfd-\bf1}(\bfa)\end{bmatrix} = c,    
$$
where the coefficients $\tau_{\bfk}(\bfa)$ ($\bfk\in \N^n$, $\bfk\prec 
\bfd-\bfone$) are those of the Lagrange interpolator \eqref{laginterpfinal}.
\end{thm}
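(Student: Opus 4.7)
My plan is to specialise Theorem~\ref{thmPb1} to the tensor-product quasi-regular sequence $\bfp=(p_1(s_1),\dots,p_n(s_n))$ that the statement itself introduces; everything else is bookkeeping across the notations of the previous sections. First I would note that $\bfp^{-1}(0)$ is the full cartesian grid of one-variable roots, so in general it contains $\{w_1,\dots,w_\mu\}$ together with ``ghost'' points $w_{\mu+1},\dots,w_m$. By Example~\ref{example4} every such zero $w$ carries the index set $A_w=\{\bfell\in\N^n : \bfell\prec \bfnu_w(\bfp)-\bfone\}$, and the attached Noetherian operators collapse to the unweighted partial derivatives $\partial^{|\bfell|}/\partial\bfs^\bfell$ at $w$. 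Consequently the functional appearing in \eqref{eqproblem2} is literally of the form \eqref{eqproblem1}, with the prescribed coefficients at $w_1,\dots,w_\mu$ and zero coefficients at the ghost points.

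Next, by Example~\ref{example1} the monomial basis delivered by iterated Euclidean division is $\mathscr B_\bfp^{\rm euclid}=\{\dot\bfs^\bfk : \bfk\prec\bfd-\bfone\}$, so $N(\bfp)=\prod_j d_j$ and the matrix $\bfQ_\bfp^{\rm euclid}$ defined in \eqref{matrixfqpart} is just the instance of \eqref{matrixQp} in this basis. Applying Proposition~\ref{propWeil1} to $f$ produces the canonical decomposition $f(\bfs)=\sum_{\bfk\prec\bfd-\bfone}\alpha_\bfk[f]\,\bfs^\bfk+g(\bfs)$ with $g$ in the local ideal, thereby defining the coefficients $\alpha_\bfk$ that appear in the stated equation and showing that any $f\in H(U)$ admits a representation of the prescribed form.

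The key verification, which I expect to be the main technical obstacle, is matching the coefficients $\tau_\bfk(\bfa)$ of the explicit Cauchy--Weil interpolator \eqref{laginterpfinal} with the abstract Lagrange coordinates $\alpha_\bfk([\bfh_\bfw^\bfa])$ supplied by Proposition~\ref{propWeil2}. Since each $p_j$ depends only on $s_j$, one can choose the B\'ezoutian matrix $\bfB_\bfp$ to be diagonal with $(j,j)$ entry $(p_j(z_j)-p_j(s_j))/(z_j-s_j)$, so that $\det\bfB_\bfp(\bfs,\bfz)=\prod_j (p_j(z_j)-p_j(s_j))/(z_j-s_j)$ is exactly the kernel appearing in \eqref{laginterpfinal}. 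With this choice, the right-hand side of \eqref{laginterpfinal} is, term by term, the residue representation of $\mathrm{Lag}[\bfh_\bfw^\bfa]$ written in the first line of \eqref{expressionresidu2}. Hence the polynomial produced by \eqref{laginterpfinal} is precisely this Lagrange interpolator, and $\tau_\bfk(\bfa)$ is its coordinate $\alpha_\bfk([\bfh_\bfw^\bfa])$ in the basis $\mathscr B_\bfp^{\rm euclid}$.

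With these identifications, Theorem~\ref{thmPb1} finishes the argument. Its first alternative asks that $\mathrm{Lag}[\bfh_\bfw^\bfa]$ vanish identically; as observed in the paragraph following Theorem~\ref{thmPb1}, for a tensor-product sequence one has $\sum_j \#A_{w_j}=N(\bfp)$, so this vanishing forces every $a_{j,\bfell}$ to be zero, which is excluded by hypothesis. We therefore land in the second alternative, and the defining equation \eqref{systemnonnull} of its affine hyperplane, expressed in the basis $\mathscr B_\bfp^{\rm euclid}$ and with $\tau_\bfk(\bfa)$ in place of $\alpha_\bfk([\bfh_\bfw^\bfa])$, is exactly the displayed matrix equation in the theorem.
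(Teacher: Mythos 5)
Your proposal is correct and is a faithful expansion of the paper's terse proof, which simply declares the theorem an ``immediate application of Theorem~\ref{thmPb1}.'' You correctly identify the tensor-product sequence, use Example~\ref{example4} to collapse the N\oe{}therian operators to plain partial derivatives, observe that the diagonal B\'ezoutian $b_{j,j}=(p_j(s_j)-p_j(z_j))/(s_j-z_j)$ makes $\det\bfB_\bfp$ coincide with the kernel in \eqref{laginterpfinal} (hence $\tau_\bfk(\bfa)=\alpha_\bfk([\bfh_\bfw^\bfa])$), and invoke the dimension count $\sum_j\#A_{w_j}=N(\bfp)$ spelled out just before the theorem to rule out the first alternative; the only slip is a citation of \eqref{expressionresidu2} where \eqref{expressionresidu} is meant, which does not affect the argument.
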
 

\begin{proof} This is an immediate application of Theorem \ref{thmPb1}. 
\end{proof} 
\section{Conclusions}
We have presented a new algebraic approach, based on residue theory and duality (see \cite{MR1704477,MR1421336,ElkMou})
to solve a scalar interpolation problem in several complex variable. In a future work we plan to exploit the present methods in the matricial case to 
study the counterpart of the bitangential interpolation problem 
(see e.g. \cite{bgr}) in the present setting. We 
focused here on the algebraic point of view. Hilbert space constraints will be considered elswhere. Both the methods and results
are different from the ones classically related up to now to interpolation in the 
Drury-Arveson space or Schur multipliers and Schur-Agler classes
(see \cite {agler-hellinger,akap1,MR2069781,MR3350215} for the latter).
\bibliographystyle{plain}

\def\cprime{$'$} \def\cprime{$'$} \def\cprime{$'$}
  \def\lfhook#1{\setbox0=\hbox{#1}{\ooalign{\hidewidth
  \lower1.5ex\hbox{'}\hidewidth\crcr\unhbox0}}} \def\cprime{$'$}
  \def\cprime{$'$} \def\cprime{$'$} \def\cprime{$'$} \def\cprime{$'$}
  \def\cprime{$'$}

\end{document}